\documentclass[12pt]{amsart}
\setcounter{secnumdepth}{5}
\usepackage[T1]{fontenc}
\usepackage[latin1]{inputenc}
\usepackage{typearea}
\usepackage{geometry}
\usepackage{ulem}
\usepackage{amsmath}
\usepackage{amssymb}
\usepackage{latexsym}
\usepackage{enumerate}
\usepackage{amsthm}
\usepackage[all]{xy}
\usepackage{hhline}
\usepackage{epsf} 
\usepackage{cite}
\usepackage{verbatim}
\usepackage{mathtools}
\usepackage{comment}
\usepackage{dsfont}
\usepackage{mathrsfs}

\setlength{\textheight}{660pt}

\usepackage{color}

\newtheorem{theorem}{{\sc Theorem}}[section]
\newtheorem{lemma}[theorem]{{\sc Lemma}}
\newtheorem{prop}[theorem]{{\sc Proposition}}

\theoremstyle{remark}
\newtheorem{remark}[theorem]{{\sc Remark}}

\theoremstyle{definition}

%
%
\newcommand{\R}{\mathbb{R} }
\newcommand{\N}{\mathbb{N} }

\newcommand{\B}{\mathcal{B}}
\newcommand{\F}{\mathcal{F}}

\newcommand{\D}{\mathcal{D}}

\newcommand{\Prob}{\mathbb{P}}

\newcommand{\la}{\lambda}

\newcommand{\Om}{\Omega}

\providecommand{\babs}[1]{\bigl\lvert #1\bigr\rvert}
\providecommand{\Babs}[1]{\Bigl\lvert #1\Bigr\rvert}

\providecommand{\norm}[1]{\lVert #1\rVert}

%
%

\renewcommand{\phi}{\varphi}
\renewcommand{\epsilon}{\varepsilon}
\newcommand{\eps}{\varepsilon}
\renewcommand{\rho}{\varrho}
\renewcommand{\P}{\Prob}

\begin{document}
\title[L\'{e}vy's continuity theorem]{A short proof of L\'{e}vy's continuity theorem without using tightness}
\author{Christian D\"obler}
\thanks{\noindent Mathematisches Institut der Heinrich Heine Universit\"{a}t D\"usseldorf\\
Email: christian.doebler@hhu.de\\
{\it Keywords: L\'{e}vy's continuity theorem; characteristic functions; Prohorov's theorem; tightness; uniqueness theorem } }
\begin{abstract}  
In this note we present a new short and direct proof of L\'{e}vy's continuity theorem in arbitrary dimension $d$, which does not rely on Prohorov's theorem, Helly's selection theorem or the uniqueness theorem for characteristic functions. Instead, it is based on convolution with a small (scalar) Gaussian distribution as well as on basic facts about weak convergence and measure theory. Moreover, we show how, by similar means, one may prove the fact that a distribution with integrable characteristic function is absolutely continuous with respect to $d$-dimensional Lebesgue measure and derive the formula for its density.   
\end{abstract}

\maketitle

\section{Introduction}
L\'{e}vy's continuity theorem is arguably one of the most frequently used tools for proving weak convergence of probability measures on $(\R^d,\B^d)$ ($\B^d$ being the $\sigma$-field of Borel sets in $\R^d$) and, as such, is a cornerstone of classical probability theory. Since the characteristic function of sums of independent, square-integrable random variables can be easily computed and approximated by their Taylor polynomial of degree $2$, it is also the device of choice for proving the classical Lindeberg-Feller CLT in most probability textbooks (see e.g. \cite{Chung, Bill, Dur}) and in lecture courses about measure-theoretic probability. For later reference we recall here its statement.

\begin{theorem}[L\'{e}vy's continuity theorem]\label{levytheo}
Let $\mu, \mu_n$, $n\in\N$, be probability measures on $(\R^d,\B(\R^d))$ with corresponding characteristic functions $\chi$ and $\chi_n$, $n\in\N$, respectively. Then, the following are equivalent:
\begin{enumerate}[{\normalfont (i)}]
 \item The sequence $(\mu_n)_{n\in\N}$ converges weakly to $\mu$.
 \item $\lim_{n\to\infty}\chi_n(t)=\chi(t)$ for all $t\in\R^d$.
 \end{enumerate}
\end{theorem}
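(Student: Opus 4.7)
The direction (i) $\Rightarrow$ (ii) is immediate, since $x\mapsto e^{i\langle t,x\rangle}$ belongs to $C_b(\R^d)$ for every $t\in\R^d$. The interesting direction is (ii) $\Rightarrow$ (i), and my plan is to smooth by convolution with a small Gaussian and then remove the smoothing.

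For $\sigma>0$, let $\gamma_\sigma$ denote the centered Gaussian measure on $\R^d$ with covariance matrix $\sigma^2\Id$. The measure $\mu_n*\gamma_\sigma$ admits a smooth density which, by writing the Gaussian kernel as its own inverse Fourier transform and applying Fubini, is given explicitly by
$$f_n^\sigma(x)=\frac{1}{(2\pi)^d}\int_{\R^d}e^{-i\langle t,x\rangle}\chi_n(t)e^{-\sigma^2\abs{t}^2/2}\,dt,$$
with the analogous formula for the density $f^\sigma$ of $\mu*\gamma_\sigma$. This step is a direct computation using the explicit Gaussian kernel and does not rely on the uniqueness theorem for characteristic functions. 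Since $\babs{\chi_n(t)e^{-\sigma^2\abs{t}^2/2}}\le e^{-\sigma^2\abs{t}^2/2}$, the dominated convergence theorem combined with the hypothesis $\chi_n\to\chi$ pointwise yields $f_n^\sigma(x)\to f^\sigma(x)$ for every $x\in\R^d$. Scheff\'{e}'s lemma then upgrades this to convergence in $L^1(\R^d)$, and consequently $\mu_n*\gamma_\sigma\to\mu*\gamma_\sigma$ in total variation, hence weakly, for each fixed $\sigma>0$.

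To pass from $\mu_n*\gamma_\sigma$ back to $\mu_n$, I will use the Portmanteau characterization of weak convergence via bounded uniformly continuous test functions. For such an $f$, I split
$$\int f\,d\mu_n-\int f\,d\mu=\Bigl[\int f\,d\mu_n-\int f\,d(\mu_n*\gamma_\sigma)\Bigr]+\Bigl[\int f\,d(\mu_n*\gamma_\sigma)-\int f\,d(\mu*\gamma_\sigma)\Bigr]+\Bigl[\int f\,d(\mu*\gamma_\sigma)-\int f\,d\mu\Bigr].$$
For fixed $\sigma$ the middle bracket tends to $0$ as $n\to\infty$ by the previous step. The first and third brackets are instances of the smoothing error, which for an arbitrary probability measure $\nu$ on $\R^d$ satisfies
$$\Babs{\int f\,d\nu-\int f\,d(\nu*\gamma_\sigma)}\le\int_{\R^d}\sup_{x\in\R^d}\babs{f(x+\sigma z)-f(x)}\,\gamma_1(dz),$$
and the right-hand side does not depend on $\nu$ and tends to $0$ as $\sigma\to 0$ by uniform continuity of $f$ together with a standard Gaussian tail estimate.

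The main obstacle, and the precise reason the argument circumvents tightness, lies in this last estimate: uniform continuity of $f$ forces the smoothing error to be controlled \emph{uniformly in} $\nu$, so no a priori compactness of $\{\mu_n\}$ is required. Given $\eps>0$, one first chooses $\sigma$ small enough that the first and third brackets are each at most $\eps$ simultaneously for all $n$, and then $n$ large enough that the middle bracket is at most $\eps$. This yields $\int f\,d\mu_n\to\int f\,d\mu$ for every bounded uniformly continuous $f$, which by the Portmanteau theorem is equivalent to $\mu_n\Rightarrow\mu$.
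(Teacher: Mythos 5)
Your proof is correct. The first half---the explicit Fourier formula for the density of $\mu_n*\gamma_\sigma$, dominated convergence from the bound $\abs{\chi_n(t)}e^{-\sigma^2\abs{t}^2/2}\le e^{-\sigma^2\abs{t}^2/2}$, and Scheff\'e's lemma to get total-variation (hence weak) convergence of the smoothed measures---is exactly the paper's argument. Where you genuinely diverge is in removing the mollification. The paper phrases everything in terms of random vectors coupled on a common probability space ($X_n^{(k)}=X_n+k^{-1}Z$, $Y^{(k)}=X+k^{-1}Z$) and invokes the triangular approximation lemma (Billingsley's Theorem 3.2, the paper's Lemma \ref{port}), whose hypothesis $\lim_k\limsup_n\P(\norm{X_n^{(k)}-X_n}>\eps)=0$ is trivially verified since the difference is just the Gaussian perturbation. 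You instead stay entirely at the level of measures and run a direct $3\eps$ argument against bounded uniformly continuous test functions, with the key observation that the smoothing error
\[
\Babs{\int f\,d\nu-\int f\,d(\nu*\gamma_\sigma)}\le\int_{\R^d}\sup_{x\in\R^d}\babs{f(x+\sigma z)-f(x)}\,\gamma_1(dz)
\]
is uniform in $\nu$ and vanishes as $\sigma\to0$ by uniform continuity and dominated convergence. The two de-smoothing steps are morally the same estimate, but yours is self-contained: it needs neither the construction of independent random vectors on a common probability space nor a separately proved approximation lemma, at the price of using the (standard but not entirely free) fact that weak convergence can be tested against bounded \emph{uniformly} continuous functions. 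The paper's route modularizes that work into Lemma \ref{port}, which is convenient in a course where that lemma is wanted anyway (e.g.\ for showing that convergence in probability implies convergence in distribution). Both routes achieve the stated goal of avoiding tightness, Prohorov/Helly, and the uniqueness theorem.
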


Recall that the standard proof of the non-trivial implication (ii) $\Rightarrow$ (i) for $d=1$ roughly proceeds as follows:
\begin{enumerate}[1)]
 \item By means of an integral inequality involving characteristic functions (see e.g. \cite[Display (26.22)]{Bill}), (ii) implies \textit{tightness} of the sequence $(\mu_n)_{n\in\N}$.
 \item By \textit{Prohorov's theorem} and 1), each subsequence of $(\mu_n)_{n\in\N}$ admits a further subsequence converging weakly to some probability measure $\nu$.
\item If a subsequence 
 of $(\mu_n)_{n\in\N}$ converges weakly to some probability measure 
 $\nu$, then (ii) and the trivial implication (i) $\Rightarrow$ (ii) together imply that $\nu$ has characteristic function $\chi$. 
 \item By the \textit{uniqueness theorem} for characteristic functions, it holds that $\nu=\mu$.
  \item An indirect argument then completes the proof that $(\mu_n)_{n\in\N}$ converges weakly to $\mu$.  
\end{enumerate}

Consequently, for this chain of arguments, both the uniqueness theorem and Prohorov's theorem need to be established before giving the proof of Theorem \ref{levytheo}. In particular, it is necessary to introduce the concept of tightness beforehand. Moreover, the case of dimension $d$ larger than one is usually reduced to the case $d=1$ by observing that tightness is equivalent to tightness in each coordinate. \smallskip\\
In a one-semester lecture course (first course) about measure-theoretic probability, the instructor usually suffers from great time restrictions, because various topics, like product spaces, independence, several modes of convergence, strong laws of large numbers, central limit theorems, conditional expectations and probabilities, martingales and often, in the very beginning, even the fundamentals of measure and integration theory must be covered. In particular, it may be desirable to restrict the theory about weak convergence on $\R^d$ to the bare minimum that is necessary for applications within the course, like the \textit{portmanteau theorem} (including some of its consequences) and, of course, L\'{e}vy's continuity theorem.

 Since the abstract metric space version of Prohorov's theorem is usually not needed in such a course and since its proof is quite technical and lengthy, the lecturer usually rather proves \textit{Helly's selection theorem}, the distribution function version of it, instead. Moreover, as multivariate distribution functions are less natural and harder to grasp than univariate ones, quite often only the univariate version of Helly's selection theorem is actually proved, although one wants to apply its statement for general $d$. 

In the summer term of 2021, the author had to deal with the above mentioned time issues when giving a course about measure-theoretic probability theory at Heinrich Heine University D\"usseldorf. Trying to avoid Prohorov's theorem, Helly's selection theorem and the concept of tightness altogether, I discovered a proof of Theorem \ref{levytheo}, which (apart from standard knowledge of measure theory) merely relies on the following facts and which I did not manage to find in the literature:
\begin{itemize}
 \item A standard consequence of the portmanteau theorem (see Lemma \ref{port}).
 \item \textit{Scheff\'{e}'s theorem}, which is a straight-forward consequence of the dominated convergence theorem and, hence, could be given as an exercise to the students (see Proposition \ref{scheffe}).
 \item A simple formula for the probability density function (p.d.f.) of the convolution with a centered Gaussian distribution with diagonal covariance matrix, which again is a standard exercise in applying Fubini's theorem and the formula for the characteristic function of a Gaussian distribution (see Lemma \ref{conv}).
\end{itemize}

Since the uniqueness theorem itself is an immediate consequence of Theorem \ref{levytheo} and as it is not needed for our argument, it becomes a simple corollary, whereas, as outlined above, it plays a major role in the standard proof sketched above. Usually, the uniqueness theorem is proved either via the \textit{Fourier inversion theorem} (see e.g. \cite[Theorem 26.2 and Formula (29.3)]{Bill}) or by means of the \textit{Stone-Weierstrass theorem}, both of whose proofs require some extra work. In \cite[Section 9.5]{Dudley}, an alternative proof of the uniqueness theorem for characteristic functions is given, which relies on convolving with a Gaussian distribution having a small scalar covariance matrix. This idea will also be the starting point of our proof of Theorem \ref{levytheo}. 

In addition to proving Theorem \ref{levytheo}, we will show that similar arguments 
also yield the following important result, which says that a distribution $\mu$ on $\R^d$, whose characteristic functions is integrable with respect to $d$-dimensional Lebesgue measure $\la^d$, is absolutely continuous and provides a formula for its density in terms of the characteristic function.   

\begin{theorem}\label{actheo}
Let $\mu$ be a probability measure on $(\R^d,\B^d)$ with characteristic function $\chi$. If $\chi\in L^1(\la^d)$, then $\mu$ is absolutely continuous with respect to $\la^d$ and a p.d.f. for $\mu$ 
is given by
\[g(z)=\frac{1}{(2\pi)^d}\int_{\R^d}\chi(y)\exp\bigl(-i\langle z,y\rangle \bigr)d\la^d(y),\quad z\in\R^d.\]
\end{theorem}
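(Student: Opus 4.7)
The plan is to apply the same Gaussian-smoothing strategy used to prove L\'evy's continuity theorem. For each $\sigma>0$, set $\mu_\sigma:=\mu*\mathcal{N}(0,\sigma^2 I_d)$, whose characteristic function is $\chi(y)\exp(-\sigma^2|y|^2/2)$. By Lemma~\ref{conv}, $\mu_\sigma$ admits the explicit p.d.f.\
\[
g_\sigma(z)=\frac{1}{(2\pi)^d}\int_{\R^d}\chi(y)\exp\bigl(-\tfrac{\sigma^2|y|^2}{2}\bigr)\exp\bigl(-i\langle z,y\rangle\bigr)\,d\la^d(y).
\]
Since $\chi\in L^1(\la^d)$, dominated convergence in $y$ (with dominator $|\chi|$) immediately yields $g_\sigma(z)\to g(z)$ for every $z\in\R^d$ as $\sigma\to 0^+$, together with the uniform bound $|g_\sigma(z)|,|g(z)|\le C:=(2\pi)^{-d}\|\chi\|_{L^1(\la^d)}$. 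Because each $g_\sigma$ is a non-negative density, the pointwise limit $g$ is itself real-valued and non-negative.

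Next I would show that $\mu_\sigma\to\mu$ weakly as $\sigma\to 0^+$: if $X\sim\mu$ is independent of $Z\sim\mathcal{N}(0,I_d)$, then $X+\sigma Z\to X$ almost surely, hence in distribution. Lemma~\ref{port} then gives $\mu_\sigma(A)\to\mu(A)$ for every bounded Borel set $A$ with $\mu(\partial A)=0$. On the other hand, since $|g_\sigma|\le C$ and $A$ is bounded, dominated convergence (with dominating function $C\,\mathbf{1}_A\in L^1(\la^d)$) yields $\mu_\sigma(A)=\int_A g_\sigma\,d\la^d\to\int_A g\,d\la^d$. Comparing the two limits,
\[
\mu(A)=\int_A g\,d\la^d\qquad\text{for every bounded $\mu$-continuity set $A$.}
\]

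To conclude, I would pick radii $r_n\uparrow\infty$ with $\mu(\partial B(0,r_n))=0$ (only countably many $r$ can fail this) and apply the identity to $A=B(0,r_n)$; monotone convergence then gives $\int_{\R^d}g\,d\la^d=\lim_n\mu(B(0,r_n))=1$, so $g\cdot\la^d$ is already a probability measure on $(\R^d,\B^d)$. It agrees with $\mu$ on the $\pi$-system of bounded open boxes $\prod_{i=1}^d(a_i,b_i)$ each of whose faces $\{x_i=a_i\}$, $\{x_i=b_i\}$ is $\mu$-null, and this $\pi$-system generates $\B^d$ because only countably many values can be bad in each coordinate. Dynkin's $\pi$-$\lambda$ theorem then promotes the identity to all of $\B^d$, proving that $g$ is a p.d.f.\ for $\mu$. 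The main obstacle, I expect, is the clean coordination of the two dominated convergence arguments --- one in $y$ giving pointwise convergence $g_\sigma\to g$, the other in $z$ restricted to bounded sets giving convergence of the integrals $\int_A g_\sigma\,d\la^d$ --- together with the book-keeping needed to ensure sufficiently many $\mu$-continuity sets are available both to invoke Lemma~\ref{port} and to generate $\B^d$.
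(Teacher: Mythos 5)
Your proposal is correct, and its first half coincides exactly with the paper's: convolve with $\mathcal{N}_d(0,\sigma^2 I_d)$, read off the explicit density $g_\sigma$ from Lemma \ref{conv}, and pass to the pointwise limit $g_\sigma\to g$ by dominated convergence with dominator $\abs{\chi}$, noting the uniform bound $C=(2\pi)^{-d}\int\abs{\chi}\,d\la^d$. Where you genuinely diverge is in identifying $g\la^d$ with $\mu$. The paper applies Scheff\'e's theorem (Proposition \ref{scheffe}) to get $\mu_{1/n}\to g\la^d$ weakly, proves $\int g\,d\la^d=1$ separately via Fatou's lemma together with an explicit tightness estimate \eqref{ac4} and dominated convergence on $[-R,R]^d$, observes that $\mu_{1/n}\to\mu$ weakly as in the proof of Theorem \ref{levytheo}, and concludes by uniqueness of weak limits. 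You instead establish the identity $\mu(A)=\int_A g\,d\la^d$ directly on bounded $\mu$-continuity sets by matching the portmanteau limit of $\mu_\sigma(A)$ against a second dominated-convergence limit of $\int_A g_\sigma\,d\la^d$, obtain $\int g\,d\la^d=1$ by exhausting $\R^d$ with balls whose boundary spheres are $\mu$-null, and finish with a $\pi$-$\lambda$ argument over boxes with $\mu$-null faces. Your route dispenses with Scheff\'e's theorem and with the paper's explicit tightness computation, at the cost of the continuity-set bookkeeping and the Dynkin step; the paper's route is shorter once Scheff\'e is in hand and stays entirely within the language of weak convergence. One small correction: the convergence $\mu_\sigma(A)\to\mu(A)$ for bounded $\mu$-continuity sets is the portmanteau theorem itself (which the paper presupposes), not Lemma \ref{port}, which is the converging-together lemma; this is only a mislabelled citation and does not affect the argument.
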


The rest of this note is structured as follows. In Section \ref{mtproof} we state the three auxiliary results mentioned above and give our alternative proofs of Theorems \ref{levytheo} and \ref{actheo}.

\section{Alternative proofs of Theorems \ref{levytheo} and \ref{actheo}}\label{mtproof}

In what follows, we denote by $\mathcal{N}_d(a,C)$ the $d$-dimensional normal distribution with mean vector $a\in\R^d$ and positive semidefinite covariance matrix $C$ and by $\mu\ast\nu$ we denote the convolution of the two probability measures $\mu$ and $\nu$ on $(\R^d,\B^d)$. Moreover, we let $I_d$ denote the $d\times d$ identity matrix. We first state the three auxiliary results that we will need for our proofs of Theorems \ref{levytheo} and \ref{actheo}.
  
The first of these is Lemma 9.5.2 in \cite{Dudley}. As mentioned above, starting from the formula for the characteristic function of a multivariate Gaussian distribution, it can easily be left as an exercise to the students (see \cite[page 304]{Dudley} for its short proof). 

\begin{lemma}\label{conv}
 Let $\mu$ be a probability measure on $(\R^d,\B^d)$ with characteristic function $\chi$. For $\sigma\in(0,\infty)$ let $\mu_\sigma:=\mu\ast \mathcal{N}_d(0,\sigma^2 I_d)$. Then, $\mu_\sigma$ has the following p.d.f. $g_\sigma$ with respect to $\la^d$: For $z\in\R^d$, 
\[g_\sigma(z)= \frac{1}{(2\pi)^d}\int_{\R^d}\chi(y)\exp\Bigl(-i\langle z,y\rangle -\frac{\sigma^2}{2}\langle y,y\rangle\Bigr)d\la^d(y). \]
\end{lemma}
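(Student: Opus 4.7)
\textbf{Proof plan for Lemma \ref{conv}.} The plan is to write the convolution density as an integral against $\mu$ of the Gaussian density, then represent the Gaussian density itself as an (inverse) Fourier transform of its own characteristic function, and finally swap the order of integration via Fubini to recognize the characteristic function $\chi$ of $\mu$.

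First, I would recall the standard fact that if $\nu$ has a Lebesgue density $\phi$, then $\mu\ast\nu$ has Lebesgue density $z\mapsto\int_{\R^d}\phi(z-x)\,d\mu(x)$. Applying this with $\nu=\mathcal{N}_d(0,\sigma^2 I_d)$, whose density is
\[
\phi_\sigma(u)=\frac{1}{(2\pi\sigma^2)^{d/2}}\exp\!\Bigl(-\frac{\langle u,u\rangle}{2\sigma^2}\Bigr),
\]
gives $g_\sigma(z)=\int_{\R^d}\phi_\sigma(z-x)\,d\mu(x)$. The second and central step is the self-dual identity
\[
\phi_\sigma(u)=\frac{1}{(2\pi)^d}\int_{\R^d}\exp\!\Bigl(-i\langle u,y\rangle-\frac{\sigma^2}{2}\langle y,y\rangle\Bigr)\,d\la^d(y),\qquad u\in\R^d,
\]
which expresses the Gaussian density as the inverse Fourier transform of the Gaussian characteristic function. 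This reduces to the one-dimensional case by the product structure and follows by completing the square and evaluating a standard one-dimensional Gaussian integral, so it can either be stated as a known fact or handed to the students as a computational exercise.

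Plugging this representation of $\phi_\sigma(z-x)$ into $g_\sigma(z)=\int\phi_\sigma(z-x)\,d\mu(x)$ yields a double integral
\[
g_\sigma(z)=\frac{1}{(2\pi)^d}\int_{\R^d}\int_{\R^d}\exp\!\Bigl(-i\langle z-x,y\rangle-\frac{\sigma^2}{2}\langle y,y\rangle\Bigr)\,d\la^d(y)\,d\mu(x).
\]
The third step is Fubini: the modulus of the integrand is $(2\pi)^{-d}\exp(-\frac{\sigma^2}{2}\langle y,y\rangle)$, which is the product of a fixed $L^1(\la^d)$ function and the constant $1$, hence integrable against $\la^d\otimes\mu$ on $\R^d\times\R^d$. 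Swapping the order of integration factors the $y$-integrand as $\exp(-i\langle z,y\rangle-\frac{\sigma^2}{2}\langle y,y\rangle)$ times $\int_{\R^d}\exp(i\langle x,y\rangle)\,d\mu(x)=\chi(y)$, which yields exactly the claimed formula.

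The only substantive step is the Gaussian Fourier identity in the second paragraph; everything else is a routine application of Fubini together with the definition of the characteristic function, and verifying absolute integrability poses no real obstacle since the Gaussian factor $\exp(-\frac{\sigma^2}{2}\langle y,y\rangle)$ provides uniform decay in $y$ while $\mu$ has total mass $1$.
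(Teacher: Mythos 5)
Your proof is correct and follows exactly the route the paper intends for this lemma (it defers to Dudley's Lemma 9.5.2 and describes the argument as "a standard exercise in applying Fubini's theorem and the formula for the characteristic function of a Gaussian distribution"): write the convolution density, invoke the Gaussian Fourier self-duality, and swap integrals by Fubini, with absolute integrability guaranteed by the factor $\exp\bigl(-\tfrac{\sigma^2}{2}\langle y,y\rangle\bigr)$. No gaps.
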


The second auxiliary result is a simplified version of \textit{Scheff\'{e}'s theorem}, which is often applied in order to conclude a CLT from a local CLT like in the de Moivre-Laplace situation. Its proof is a standard exercise in applying the dominated convergence theorem.

\begin{prop}[Scheff\'{e}'s theorem]\label{scheffe}
 Let $g,g_n$, $n\in\N$, be probability density functions with respect to $\la^d$ such that $\lim_{n\to\infty} g_n=g$ 
 $\la^d$-a.e.. Then, for all $A\in\B^d$ one has that 
 \[\lim_{n\to\infty}\int_A\babs{g_n-g}d\la^d=0.\]
 In particular, the sequence $(g_n\la^d)_{n\in\N}$ converges in total variation and, a fortiori, weakly to the probability measure $g\la^d$.
\end{prop}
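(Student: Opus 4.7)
The plan is to reduce everything to the integral $\int_{\R^d}|g_n-g|\,d\la^d$ by a clean application of dominated convergence to the positive (or negative) part of $g-g_n$. The trick is that, although $|g_n-g|$ has no obvious integrable majorant in general, the positive part $(g-g_n)^+ := \max(g-g_n,0)$ is bounded above by $g$, which is integrable with respect to $\la^d$ by assumption. Since $g_n\to g$ $\la^d$-a.e., we also have $(g-g_n)^+\to 0$ $\la^d$-a.e., so dominated convergence yields $\lim_{n\to\infty}\int_{\R^d}(g-g_n)^+\,d\la^d=0$.

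Next, I would exploit the fact that both $g$ and $g_n$ are probability densities. This gives $\int_{\R^d}(g-g_n)\,d\la^d = 1-1=0$, which rewrites as $\int_{\R^d}(g-g_n)^+\,d\la^d = \int_{\R^d}(g-g_n)^-\,d\la^d$. Combined with the decomposition $|g-g_n| = (g-g_n)^+ + (g-g_n)^-$, this yields $\int_{\R^d}|g_n-g|\,d\la^d = 2\int_{\R^d}(g-g_n)^+\,d\la^d \to 0$. The claim for arbitrary $A\in\B^d$ is then immediate from $\int_A|g_n-g|\,d\la^d \leq \int_{\R^d}|g_n-g|\,d\la^d$.

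For the concluding assertion, I would observe that total variation convergence follows at once: for every $A\in\B^d$,
\[\babs{(g_n\la^d)(A) - (g\la^d)(A)} = \Babs{\int_A(g_n-g)\,d\la^d} \leq \int_{\R^d}|g_n-g|\,d\la^d,\]
so the supremum over $A$ tends to zero. Weak convergence then follows from the standard fact that convergence in total variation implies weak convergence (test against any bounded measurable function and, a fortiori, against any bounded continuous one).

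I do not expect a serious obstacle here: the argument is almost entirely bookkeeping once one notices the majorant $g$ for $(g-g_n)^+$. The only point that requires a moment of care is the step where one trades $|g-g_n|$ for $2(g-g_n)^+$ using $\int(g-g_n)\,d\la^d=0$, since a direct dominated convergence argument on $|g-g_n|$ would fail for want of a global integrable dominating function.
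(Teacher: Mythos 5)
Your proof is correct and is exactly the standard argument the paper has in mind: the paper gives no written proof, describing the result only as ``a standard exercise in applying the dominated convergence theorem,'' which is precisely your route via the majorant $g$ for $(g-g_n)^+$ and the identity $\int_{\R^d}(g-g_n)\,d\la^d=0$. Nothing to add.
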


As we will make use of the following lemma, our proof of Theorem \ref{levytheo} is given in terms of random vectors and \textit{convergence in distribution}, which we denote by $\stackrel{\D}\longrightarrow$. Moreover, we denote by $\stackrel{\P}\longrightarrow$ \textit{convergence in probability}. In what follows, for mere notational reasons, we will assume that all appearing random vectors are defined on the same probability space $(\Om,\F,\P)$ (see, however, Remark \ref{portrem} below). Moreover, we denote by $\norm{\cdot}$ an arbitrary fixed norm on $\R^d$.

Our third auxiliary result is a standard consequence of the portmanteau theorem (see e.g. \cite[Theorem 3.2]{Bill2}).

\begin{lemma}\label{port}
Let $X, X_n, X_{n}^{(k)}, Y^{(k)}:(\Om,\F)\rightarrow(\R^d,\B^d)$, $n,k\in\N$, be random vectors. Suppose that, for each $k\in\N$, the sequence $(X_n^{(k)})_{n\in\N}$ converges in distribution to $Y^{(k)}$ and that the sequence $(Y^{(k)})_{k\in\N}$ converges in distribution to $X$. Moreover, suppose that for each $\eps>0$:
\begin{equation*}
\lim_{k\to\infty}\limsup_{n\to\infty}\P\bigl(\norm{X_n^{(k)}-X_n}>\eps\bigr)=0.
\end{equation*}
Then, the sequence $(X_n)_{n\in\N}$ converges in distribution to $X$.
\end{lemma}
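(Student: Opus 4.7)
The plan is to verify the portmanteau characterization that $\limsup_{n\to\infty}\P(X_n\in F)\le\P(X\in F)$ holds for every closed set $F\subseteq\R^d$, which by the portmanteau theorem is equivalent to $X_n\stackrel{\D}\longrightarrow X$. The crucial device is the closed $\eps$-neighborhood $F^\eps:=\{z\in\R^d:\inf_{x\in F}\norm{z-x}\le\eps\}$, which is itself closed for every $\eps>0$ and satisfies $\bigcap_{\eps>0}F^\eps=\overline{F}=F$ (here I use that $F$ is closed).

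First I would fix a closed $F\subseteq\R^d$ and $\eps>0$ and exploit the elementary inclusion $\{X_n\in F\}\subseteq\{X_n^{(k)}\in F^\eps\}\cup\{\norm{X_n-X_n^{(k)}}>\eps\}$, which, on passing to probabilities, gives
\[\P(X_n\in F)\le\P(X_n^{(k)}\in F^\eps)+\P(\norm{X_n-X_n^{(k)}}>\eps)\]
for all $n,k\in\N$. Taking $\limsup_{n\to\infty}$, using subadditivity of $\limsup$ and applying the portmanteau theorem to the closed set $F^\eps$ together with $X_n^{(k)}\stackrel{\D}\longrightarrow Y^{(k)}$, one obtains
\[\limsup_{n\to\infty}\P(X_n\in F)\le\P(Y^{(k)}\in F^\eps)+\limsup_{n\to\infty}\P(\norm{X_n-X_n^{(k)}}>\eps).\]

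Next I would let $k\to\infty$. By the hypothesis of the lemma, the second term on the right tends to $0$, while by a second application of the portmanteau theorem, now to the closed set $F^\eps$ and the convergence $Y^{(k)}\stackrel{\D}\longrightarrow X$, the first term satisfies $\limsup_{k\to\infty}\P(Y^{(k)}\in F^\eps)\le\P(X\in F^\eps)$. Combining these bounds yields
\[\limsup_{n\to\infty}\P(X_n\in F)\le\P(X\in F^\eps).\]
Finally I would let $\eps\downarrow0$ along $\eps_m:=1/m$: since the $F^{1/m}$ decrease to $F$ and $\P$ is a finite measure, continuity from above gives $\P(X\in F^{1/m})\downarrow\P(X\in F)$, which produces the desired inequality.

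The proof is in essence just three applications of the portmanteau theorem, linked together by the convergence-in-probability-like hypothesis on $\norm{X_n^{(k)}-X_n}$. The only delicate points are choosing the \emph{closed} $\eps$-neighborhood of $F$ (so that portmanteau is applicable at every stage without worrying about boundary sets) and taking the three limits in the correct order $n\to\infty$, $k\to\infty$, $\eps\downarrow 0$. Beyond this bookkeeping I do not anticipate a genuine obstacle.
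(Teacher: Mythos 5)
Your argument is correct and is precisely the standard proof of this ``converging together'' lemma: the paper itself gives no proof, deferring to Billingsley's \emph{Convergence of Probability Measures}, Theorem 3.2, whose argument is exactly your chain of the inclusion $\{X_n\in F\}\subseteq\{X_n^{(k)}\in F^\eps\}\cup\{\norm{X_n-X_n^{(k)}}>\eps\}$, two applications of the portmanteau inequality for closed sets, and continuity from above as $F^{1/m}\downarrow F$. Nothing is missing.
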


\begin{remark}\label{portrem}
\begin{enumerate}[(a)]
\item Note that, for Lemma \ref{port} to hold, it is actually sufficient that, for each fixed $(n,k)\in\N^2$, the vectors $X_n$ and $X_n^{(k)}$, $k\in\N$, are defined on the same probability space. 
\item Lemma \ref{port} in particular yields the fact that convergence in probability implies convergence in distribution.
\end{enumerate}
\end{remark}
 
We have now listed all the ingredients for our alternative proof of Theorem \ref{levytheo}.

\begin{proof}[Proof of (ii) $\Rightarrow$ (i) in Theorem \ref{levytheo}]
Let $X,Z,X_n$, $n\in\N$, be independent random vectors on a probability space $(\Om,\F,\P)$, such that $X\sim\mu$, $X_n\sim\mu_n$, $n\in\N$, and $Z\sim \mathcal{N}_d(0,I_d)$. (Actually, it suffices to construct, for each fixed $n\in\N$, independent random vectors $X(n),Z(n),X_n(n)$ on some probability space $(\Om_n,\F_n,\P_n)$ such that $X(n)\sim\mu$, $X_n(n)\sim\mu_n$ and $Z(n)\sim \mathcal{N}_d(0,I_d)$. Hence, infinite product spaces are not necessary for this proof to be carried out).

For $n,k\in\N$ define further $Z_k:=k^{-1} Z\sim \mathcal{N}_d(0,k^{-2} I_d)$ as well as $Y^{(k)}:=X+Z_k$ and $X_n^{(k)}:=X_n+ Z_k$. 
Then, by Lemma \ref{conv}, for each $k\in\N$, $Y^{(k)}$ has p.d.f. 
 \[g_k(z)= \frac{1}{(2\pi)^d}\int_{\R^d}\chi(y)\exp\Bigl(-i\langle z,y\rangle -\frac{1}{2k^2}\langle y,y\rangle\Bigr)d\la^d(y), \quad z\in\R^d, \]
and, similarly, $X_n^{(k)}$ has p.d.f.  
\[g_{n,k}(z)= \frac{1}{(2\pi)^d}\int_{\R^d}\chi_n(y)\exp\Bigl(-i\langle z,y\rangle -\frac{1}{2k^2}\langle y,y\rangle\Bigr)d\la^d(y), \quad z\in\R^d, \]
for all $n,k\in\N$. Since
\begin{equation}\label{lp1}
\Babs{\chi_n(y)\exp\Bigl(-i\langle z,y\rangle -\frac{1}{2k^2}\langle y,y\rangle\Bigr) }\leq \exp\Bigl(-\frac{1}{2k^2}\langle y,y\rangle\Bigr),\quad y\in\R^d,
\end{equation}
holds for all $n,k\in\N$ and the right hand side of \eqref{lp1} is in $L^1(\la^d)$, from (ii) and the dominated convergence theorem we conclude that 
\[\lim_{n\to\infty} g_{n,k}(z)=g_k(z),\quad z\in\R^d,\]
 for each $k\in\N$. By Proposition \ref{scheffe} this implies for each $k\in\N$ that 
\begin{equation}\label{lp2}
X_n^{(k)}\stackrel{\D}{\longrightarrow} Y^{(k)}\text{ as }n\to\infty.
\end{equation}
Moreover, for each $\eps>0$, we have that 
\[\P(\|Y^{(k)}-X\|>\eps)=\P(\|Z_k\|>\eps)=\P(\norm{Z}>k\eps)\stackrel{k\to\infty}{\longrightarrow}0,\]
implying that $Y^{(k)}\stackrel{\P}{\longrightarrow}X$ and, a fortiori, that
\begin{equation}\label{lp3}
 Y^{(k)}\stackrel{\D}{\longrightarrow}X\text{ as }k\to\infty.
\end{equation} 
Finally, for each $\eps>0$, it holds that 
\begin{align}\label{lp4}
\lim_{k\to\infty}\limsup_{n\to\infty}\P\bigl(\norm{X_n^{(k)}-X_n}>\eps\bigr)&=\lim_{k\to\infty}\limsup_{n\to\infty}\P\bigl(\norm{Z_k}>\eps\bigr)\notag\\
&=\lim_{k\to\infty}\P\bigl(\norm{Z}>k\eps\bigr)=0,
\end{align}
so that, by virtue of \eqref{lp2}-\eqref{lp4}, Lemma \ref{port} implies that $X_n\stackrel{\D}{\longrightarrow} X$ as $n\to\infty$.  
\end{proof}

\begin{proof}[Proof of Theorem \ref{actheo}]
Let $X\sim\mu$ and $Z\sim\mathcal{N}_d(0,I_d)$ be independent random vectors on a probability space $(\Om,\F,\P)$, and, for $n\in\N$, define $Z_n:=n^{-1}Z\sim\mathcal{N}_d(0,n^{-2}I_d)$ as well as $X_n:=X+Z_n$.
By Lemma \ref{conv}, for each $n\in\N$, $X_n$ has the following p.d.f. $g_n$ with respect to $\la^d$: 
\[g_n(z)=\frac{1}{(2\pi)^d}\int_{\R^d}\chi(y)\exp\Bigl(-i\langle z,y\rangle -\frac{1}{2n^2}\langle y,y\rangle\Bigr)d\la^d(y), \quad z\in\R^d.\]
Since $\chi$ is integrable by assumption, 
\[\lim_{n\to\infty} \chi(y)\exp\Bigl(-i\langle z,y\rangle -\frac{1}{2n^2}\langle y,y\rangle\Bigr)=\chi(y)\exp\bigl(-i\langle z,y\rangle\bigr)\]
and 
\[\Babs{\chi(y)\exp\Bigl(-i\langle z,y\rangle -\frac{1}{2n^2}\langle y,y\rangle\Bigr)}\leq \babs{\chi(y)},\quad y,z\in\R^d,\,n\in\N,\]
the dominated convergence theorem implies for each $z\in\R^d$ that 
\begin{equation}\label{ac1}
\lim_{n\to\infty} g_n(z)=\frac{1}{(2\pi)^d}\int_{\R^d}\chi(y)\exp\bigl(-i\langle z,y\rangle \bigr)d\la^d(y)=:g(z).
\end{equation}
Next, we show that $g$ is in fact a p.d.f.  with respect to $\la^d$. Since $g_n\geq 0$ (at least $\la^d$-a.e.) for all $n\in\N$, also $g\geq0$ ($\la^d$-a.e.).
Moreover, by Fatou's lemma,
\begin{align}\label{ac2}
0\leq\int_{\R^d} gd\la^d&=\int_{\R^d} \lim_{n\to\infty} g_nd\la^d\leq\liminf_{n\to\infty} \int_{\R^d} g_nd\la^d=1.
\end{align}
Also, for each $n\in\N$ and $z\in\R^d$ one has that
\begin{equation}\label{ac3}
\babs{g_n(z)}\leq \frac{1}{(2\pi)^d}\int_{\R^d}\babs{\chi(y)}d\la^d(y)=:C<\infty.
\end{equation}
Next, we claim that for each $\eps>0$ there is an $R\in\N$ such that  
\begin{equation}\label{ac4}
\P(\norm{X_n}\leq R)\geq 1-\eps
\end{equation}
holds for alle $n\in\N$, where, here, $\norm{\cdot}$ denotes the maximum norm (i.e. we implicitly establish tightness of the sequence $(X_n)_{n\in\N}$ here). 
Indeed, since 
\[\lim_{N\to\infty}\P(\norm{X}\leq N)=\P(X\in\R^d)=1,\]
 there is an $S\in\N$ such that  
$\P(\norm{X}\leq S)\geq 1-\eps/2$. Moreover,
\[\lim_{n\to\infty} \P(\norm{Z_n}> S)=\lim_{n\to\infty} \P(\norm{Z}> nS)=0,\]
so that, by enlarging $S$ if necessary, we can assume that $\P(\norm{Z_n}> S)<\eps/2$ for all $n\in\N$.
Letting $R:=2S$ we finally have
\begin{align*}
\P(\norm{X_n}> R)&=\P(\norm{X+Z_n}> 2S)\leq \P(\norm{Z_n}> S)+\P(\norm{X}> S)<2\frac{\eps}{2}=\eps,
\end{align*}
for each $n\in\N$ so that \eqref{ac4} is satisfied.
From \eqref{ac1}, \eqref{ac4} and the dominated convergence theorem it then follows that 
\begin{align}\label{ac5}
\int_{[-R,R]^d} gd\la^d&=\lim_{n\to\infty} \int_{[-R,R]^d} g_nd\la^d=\lim_{n\to\infty} \P(\norm{X_n}\leq R)\geq 1-\eps
\end{align}
and, since $\eps>0$ was arbitrary, \eqref{ac5} and \eqref{ac2} together imply that 
\[\int_{\R^d} gd\la^d=1,\]
so that $g$ is indeed a p.d.f.. By Proposition \ref{scheffe} and \eqref{ac1} we have for the laws $\mathcal{L}(X_n)$ of $X_n$, $n\in\N$, that $\mathcal{L}(X_n)\stackrel{w}{\longrightarrow} g\la^d$. 
On the other hand, it follows as in the proof of Theorem \ref{levytheo} that also $\mathcal{L}(X_n)\stackrel{w}{\longrightarrow} \mu$, and from the uniqueness of weak limits we conclude that $\mu$ indeed has p.d.f. $g$, as claimed.
\end{proof}

\normalem
\bibliography{levy}{}

\begin{thebibliography}{Dud02}

\bibitem[Bil95]{Bill}
P.~Billingsley.
\newblock {\em Probability and measure}.
\newblock Wiley Series in Probability and Mathematical Statistics. John Wiley
  \& Sons, Inc., New York, third edition, 1995.
\newblock A Wiley-Interscience Publication.

\bibitem[Bil99]{Bill2}
P.~Billingsley.
\newblock {\em Convergence of probability measures}.
\newblock Wiley Series in Probability and Statistics: Probability and
  Statistics. John Wiley \& Sons, Inc., New York, second edition, 1999.
\newblock A Wiley-Interscience Publication.

\bibitem[Chu01]{Chung}
K.~L. Chung.
\newblock {\em A course in probability theory}.
\newblock Academic Press, Inc., San Diego, CA, third edition, 2001.

\bibitem[Dud02]{Dudley}
R.~M. Dudley.
\newblock {\em Real analysis and probability}, volume~74 of {\em Cambridge
  Studies in Advanced Mathematics}.
\newblock Cambridge University Press, Cambridge, 2002.
\newblock Revised reprint of the 1989 original.

\bibitem[Dur10]{Dur}
R.~Durrett.
\newblock {\em Probability: theory and examples}, volume~31 of {\em Cambridge
  Series in Statistical and Probabilistic Mathematics}.
\newblock Cambridge University Press, Cambridge, fourth edition, 2010.

\end{thebibliography}
\bibliographystyle{alpha}
\end{document}